\documentclass[12pt,twoside,reqno]{amsart}

\usepackage{microtype}
\usepackage[OT1]{fontenc}
\usepackage{type1cm}
\usepackage{amssymb}
\usepackage[left=2.3cm,top=3cm,right=2.3cm]{geometry}

\geometry{a4paper,centering}

\parskip 1.0ex
\linespread{1.3}

\usepackage{graphics,color}
\usepackage{color}

\numberwithin{equation}{section}

\theoremstyle{plain}
\newtheorem{theorem}{Theorem}[section]
\newtheorem{maintheorem}{Theorem}

\newtheorem{proposition}[theorem]{Proposition}
\newtheorem{lemma}[theorem]{Lemma}

\theoremstyle{remark}
\newtheorem{remark}[theorem]{Remark}

\newtheorem{example}[theorem]{Example}
\newtheorem*{ack}{Acknowledgement}

\theoremstyle{definition}

\newtheorem{fact}{Fact}

\newcommand*{\myproofname}{Proof}
\newenvironment{myproof}[1][\myproofname]{\begin{proof}[#1]}{\end{proof}}

\newcommand{\HH}{\mathcal{H}}

\newcommand{\R}{\mathbb{R}}

\newcommand{\N}{\mathbb{N}}

\newcommand{\eps}{\varepsilon}
\newcommand{\fii}{\varphi}

\DeclareMathOperator{\diam}{diam}

\DeclareMathOperator{\diag}{diag}
\DeclareMathOperator{\image}{Img}

\begin{document}

\title{Self-affine sets in analytic curves and algebraic surfaces}

\author{De-Jun Feng}
\address{Department of Mathematics \\
         The Chinese University of Hong Kong \\
         Shatin \\
         Hong Kong}
\email{djfeng@math.cuhk.edu.hk}

\author{Antti K\"aenm\"aki}
\address{Department of Mathematics and Statistics \\
         P.O. Box 35 (MaD) \\
         FI-40014 University of Jyv\"askyl\"a \\
         Finland}
\email{antti.kaenmaki@jyu.fi}

\subjclass[2000]{Primary 28A80; Secondary 32C25, 51H30.}
\keywords{Self-affine set, analytic curve, algebraic surface}
\date{\today}

\begin{abstract}
  We characterize analytic curves that contain non-trivial self-affine sets. We also prove that compact algebraic surfaces do not contain non-trivial self-affine sets.
\end{abstract}

\maketitle

\section{Introduction}

Self-similar and self-affine sets  are among the most typical and important fractal objects; see e.g.\ \cite{Falconer1990}.  They can be generated by the so-called iterated function systems; see Section \ref{sec:preli}. Although these sets can be very irregular as one expects, they often have very rigid geometric structure.

It is not surprising that typical non-flat smooth manifolds do not contain any non-trivial self-similar or self-affine set. For instance, circles are such examples. To see this, suppose to the contrary that a circle $C$  contains a non-trivial self-affine set $E$. Let $f$ be a contractive  affine map in the defining iterated function system of $E$. Then $f(E) \subset E$ and thus $f(E)$ is contained in both $C$ and $f(C)$. However, since $f(C)$ is an ellipse with diameter strictly smaller than that of $C$, the intersection of $f(C)$ and $C$ contains at most two points. This is a contradiction since $f(E)$ is an infinite set.

The above general phenomena was first clarified  by Mattila~\cite{Mattila1982} in the self-similar case.  He proved that a self-similar set $E$ satisfying the open set condition either lies on an $m$-dimensional affine subspace or $\HH^t(E \cap M) = 0$ for every $m$-dimensional $C^1$-submanifold of $\R^n$. Here $t$ is the Hausdorff dimension of $E$ and $\HH^t$ is the $t$-dimensional Hausdorff measure. This  result  was later generalized to self-conformal sets in \cite{Kaenmaki2003, Kaenmaki2006, MayerUrbanski2003}. As a related work,  Bandt and Kravchenko~\cite{BandtKravchenko2011} showed that if $E$ is a self-similar set which spans $\R^n$ and $x \in E$, then there does not exist a tangent hyperplane of $E$ at $x$.

As an easy consequence of the result of Mattila or that of Bandt and Kravchenko, an analytic planar curve does not contain any non-trivial self-similar set unless it is a straight line segment. In a private communication, Mattila asked  which kind of analytic planar curves  can contain a non-trivial self-affine set. The main purpose of this article is to answer this question.

We first remark that any closed parabolic arc is a self-affine set. This interesting fact was first pointed out by Bandt and Kravchenko~\cite{BandtKravchenko2011}. In that paper, they considered self-affine planar curves consisting of two pieces $E = f_1(E) \cup f_2(E)$. They showed that if a certain condition on the eigenvalues of $f_1$ and $f_2$ holds, then the curve $E$ is differentiable at all except for countably many points. They also introduced a stronger condition on the eigenvalues which guarantees the curve $E$ to be continuously differentiable. This result implies that there exist many continuously differentiable self-affine curves. However, Bandt and Kravchenko furthermore showed that self-affine curves cannot be very smooth: the only simple $C^2$ self-affine planar curves are parabolic arcs and straight lines.

In our main result, instead of curves that are itself self-affine, we consider general self-affine sets and examine when they can be contained in an analytic curve.

\begin{maintheorem} \label{thm:main}
 An analytic curve in $\R^n$, $n \ge 2$, which cannot be embedded in a hyperplane contains a non-trivial self-affine set if and only if it is an affine image of $\eta \colon [c,d] \to \R^n$, $\eta(t) = (t,t^2,\ldots,t^n)$, for some $c<d$.
\end{maintheorem}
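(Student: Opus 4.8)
The plan is to treat the two implications separately, the forward (``only if'') direction being the substantial one.

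For the \emph{if} direction, I would reduce to showing that an arc $\eta([c,d])$ of the moment curve is itself a non-trivial self-affine set, since the image of a self-affine set under an invertible affine map is again self-affine (conjugate the defining maps). The key observation is that for every $a,b \in \R$ the assignment $\eta(t) \mapsto \eta(at+b)$ extends to a global affine map $T_{a,b}$ of $\R^n$: writing $(at+b)^k = b^k + \sum_{j=1}^k \binom{k}{j}a^j b^{k-j}t^j$ shows that $T_{a,b}(x) = Mx + w$ with $M$ lower triangular, $M_{kj} = \binom{k}{j}a^j b^{k-j}$, and $w_k = b^k$. The eigenvalues of $M$ are $a, a^2, \ldots, a^n$, so $T_{a,b}$ is a contraction in an adapted norm whenever $|a| < 1$. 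Splitting $[c,d]$ into two halves and using the two maps $T_{1/2, b_1}, T_{1/2, b_2}$ carrying $[c,d]$ onto these halves exhibits $\eta([c,d])$ as the attractor of a two-map iterated function system, hence as a non-trivial self-affine set.

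For the \emph{only if} direction, suppose a non-degenerate regular analytic curve $\gamma \colon I \to \R^n$ contains a non-trivial self-affine set $E$, the attractor of contractions $f_1, \ldots, f_m$. The first step is a rigidity lemma: for each $i$, since $f_i(E) \subseteq E \subseteq \gamma(I)$, the infinite compact set $f_i(E)$ lies on both analytic arcs $\gamma$ and $f_i \circ \gamma$ and accumulates at one of its own points; writing one arc locally as an analytic graph over the other and noting that the analytic defect vanishes on a set with an accumulation point forces the two images to coincide nearby. Consequently there is an analytic reparametrization $g_i$ with $f_i \circ \gamma = \gamma \circ g_i$, which propagates by analytic continuation. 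Fixing one map $f = f_{i_0}$, its fixed point is $\gamma(t^\ast) \in E$ and $g = g_{i_0}$ fixes $t^\ast$ with multiplier $\lambda = g'(t^\ast)$ satisfying $0 < |\lambda| < 1$ (here regularity is used: $\gamma'(t^\ast)$ is an eigenvector of the linear part $A$ of $f$ for the eigenvalue $\lambda$). By Koenigs' linearization theorem I may reparametrize so that $g(s) = \lambda s$; translating so that $\gamma(t^\ast) = 0$, the functional equation becomes $A\gamma(s) = \gamma(\lambda s)$. Expanding $\gamma(s) = \sum_{k \ge 1}a_k s^k$ gives $A a_k = \lambda^k a_k$, so each nonzero $a_k$ is an eigenvector of $A$ for the eigenvalue $\lambda^k$. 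Since distinct powers $\lambda^k$ are distinct and non-degeneracy forces the $a_k$ to span $\R^n$, exactly $n$ of them are nonzero; choosing these as a basis puts $\gamma$ into the normal form $\gamma(s) = (s^{k_1}, \ldots, s^{k_n})$ with $1 = k_1 < k_2 < \cdots < k_n$, the equality $k_1 = 1$ coming from regularity at $s = 0$.

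It remains to prove $\{k_1, \ldots, k_n\} = \{1, \ldots, n\}$, and here the non-triviality of $E$ enters decisively. If every $f_i$ fixed the common point $\gamma(t^\ast)$, then, all being linear contractions about that point, they would force $E = \{\gamma(t^\ast)\}$; hence some $f_j$ has $g_j(0) = c_0 \neq 0$. For this map the functional equation reads coordinatewise $g_j(s)^{k_r} = \beta_r + \sum_l A_{r,l}s^{k_l}$ for $r = 1, \ldots, n$, where (using $k_1 = 1$) $g_j$ is itself the polynomial appearing in the first coordinate, with exponents among $\{0,1,k_2,\ldots,k_n\}$. Comparing degrees in the $r = n$ equation gives $k_n \deg g_j \le k_n$, so $g_j(s) = c_0 + c_1 s$ is affine with $c_0, c_1 \neq 0$. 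Expanding $(c_0 + c_1 s)^{k_n}$ then produces every exponent $2, 3, \ldots, k_n - 1$ with nonzero coefficient, and each of these must lie in $\{k_2, \ldots, k_{n-1}\}$; counting gives $k_n - 2 \le n - 2$, i.e.\ $k_n \le n$, which together with $k_1 < \cdots < k_n$ forces $\{k_i\} = \{1, \ldots, n\}$. Thus $\gamma$ is, in suitable affine coordinates, an arc of $\eta$, and undoing the coordinate change shows $\gamma$ is an affine image of $\eta$. The main obstacle I anticipate is the rigidity lemma converting ``$f_i(E)$ is infinite'' into the clean functional equation $f_i \circ \gamma = \gamma \circ g_i$ with analytic $g_i$ — in particular controlling the reparametrization across the whole relevant arc and verifying that the multiplier is a genuine contraction so that Koenigs' theorem applies. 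The subsequent exponent bookkeeping is elementary once the normal form and a non-centered self-map are in hand; the conceptual heart is that admitting an affine self-map with nonzero translation part is exactly what singles out the moment curve.
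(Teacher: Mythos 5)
Your proposal has the same skeleton as the paper's proof --- derive a monomial normal form for the curve around the fixed point of one IFS map, then use a second map (or second point) to force the exponents to be $1,2,\ldots,n$ --- but the machinery in the middle is genuinely different and arguably cleaner. The paper never linearizes the induced reparametrization $g$: it keeps the real Jordan form of the linear part, proves by a case-by-case Taylor-coefficient comparison that every Jordan block is $1\times 1$ (its Fact 2), and then upgrades the scaling identity $x_k^*(\lambda x_1^*)=\lambda^{p_k}x_k^*(x_1^*)$ to the exact monomial $x_k^*=c_k(x_1^*)^{p_k}$ via an iterated-derivative argument (its Fact 3). Your Koenigs step replaces all of this: once $g(s)=\lambda s$, the relation $Aa_k=\lambda^k a_k$ makes every nonzero Taylor coefficient an eigenvector for a distinct eigenvalue $\lambda^k$, so at most $n$ are nonzero, and non-degeneracy forces exactly $n$ --- the normal form in one stroke, with no Jordan analysis. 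Your endgame also differs in mechanics: the paper re-runs its normal-form argument at a second fixed point $t_1\ne 0$ of the IFS of a self-affine subset and compares the two normal forms through a linear change of coordinates, whereas you use the functional equation of a single map $f_j$ that does not fix the base point and do the degree count directly; both endgames rest on the same binomial-expansion/counting observation.

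Three places need patching. (i) In the ``if'' direction, your maps $T_{1/2,b}$ are contractions only in an adapted norm, but the paper's definition of an affine IFS demands strict contractivity in the Euclidean metric; for $n=2$ the linear part of $T_{a,b}$ has off-diagonal entry $2ab$, which is large when $|b|$ is large. Fix by taking the contraction ratio small and using many maps (as in the paper's Lemma 3.1), or by passing to high iterates: compositions of your maps are again of the form $T_{a,b}$ with $a=2^{-m}$ and $b$ bounded, hence Euclidean contractions for large $m$, with the same attractor. (ii) More substantively, in your last step you write $\gamma(g_j(s))=(g_j(s),g_j(s)^{k_2},\ldots,g_j(s)^{k_n})$, but the normal form was established only on a Koenigs neighborhood of $s=0$, while $g_j(0)=c_0\ne 0$ may land outside it. You must either propagate the identities $x_r=x_1^{k_r}$ along the entire curve by the identity theorem (so the whole curve lies on the global monomial curve; then both sides of your coordinatewise equation are polynomials agreeing on a set of parameters accumulating at $0$, hence identically), or first replace $E$ by a smaller non-trivial self-affine set $f_w(E)$ contained in the normal-form piece and work with the conjugated IFS --- this is exactly how the paper arranges its second application. (iii) The theorem assumes neither regularity nor simplicity of the curve, so the reduction to a regular simple sub-arc containing a non-trivial self-affine subset (done in the paper via Baire category) should be stated rather than assumed. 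All three are short repairs; the core of your argument is correct.
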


The above result gives a complete answer to the question of Mattila: the only analytic planar curves that contain non-trivial self-affine sets are parabolic arcs and straight line segments. As explained by Mattila, the question is related to the study of singular integrals and self-similar sets in Heisenberg groups. In such groups, self-similar sets are self-affine in the Euclidean metric. From the singular integral theory point of view, it is thus important to understand when a self-affine set is contained in an analytic manifold.

Concerning manifolds, we study an analogue of Mattila's question. We examine which kind of algebraic surfaces can contain self-affine sets. Our result shows that this cannot happen on compact surfaces.

\begin{maintheorem} \label{thm:no_self-affine}
  A compact algebraic surface does not contain non-trivial self-affine sets.
\end{maintheorem}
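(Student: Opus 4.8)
The plan is to use Theorem \ref{thm:main} to reduce the surface problem to the curve problem, exploiting the fact that a contraction strictly shrinks a compact surface. Write $S$ for the algebraic surface, suppose for contradiction that a non-trivial self-affine set $E$ with generating maps $f_1,\dots,f_m$ satisfies $E \subseteq S$, and fix one contraction $f=f_i$ of ratio $s<1$. Since $f(E)\subseteq E\subseteq S$ and $f(E)\subseteq f(S)$, the affine copy $f(E)$ of $E$ lies in the real algebraic set $\Gamma := S\cap f(S)$. The point of compactness is that $f(S)$ is again an algebraic surface of the same degree but of strictly smaller diameter, so $f(S)\ne S$; I would first argue that this forces the relevant part of $\Gamma$ to be one-dimensional. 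Indeed, if $S$ and $f(S)$ shared an irreducible two-dimensional component $V$, then $V=f(V')$ for a component $V'$ of $S$, and $\diam V = \diam f(V')\le s\,\diam V'<\diam V'$ gives $V'\ne V$; should $f(E)$ lie in such a $V$, then $E=f^{-1}(f(E))\subseteq f^{-1}(V)=V'$, whence $f(E)\subseteq E\cap V\subseteq V'\cap V$, which is again one-dimensional. In every case $f(E)$, a non-trivial self-affine set, lands inside a compact real algebraic curve, which I again denote by $\Gamma$.

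Next I would localise to a genuine analytic arc so that Theorem \ref{thm:main} becomes applicable. Note that $f(E)$ is itself self-affine, being the attractor of the conjugated system $\{f f_j f^{-1}\}_{j}$, and its cylinders $f(f_w(E))$ are affine copies of $E$ whose diameters tend to $0$. The singular locus of the algebraic curve $\Gamma$ is finite, while $f(E)$ is infinite, so $f(E)$ contains a point $x_0$ that is a smooth point of $\Gamma$. Choosing $r$ so small that $\Gamma\cap B(x_0,r)$ is a single real-analytic arc $\gamma$, and then a cylinder through $x_0$ of diameter less than $r$, I obtain a non-trivial self-affine set contained in $\gamma$.

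Now I would invoke Theorem \ref{thm:main}. If $\gamma$ cannot be embedded in a hyperplane, then it is an affine image of $\eta\colon[c,d]\to\R^n$, $\eta(t)=(t,t^2,\dots,t^n)$; if $\gamma$ lies in a plane, then the self-affine set inside $\gamma$ lives in $\R^2$ and the planar case identifies $\gamma$ as an affine image of a parabolic arc or of a line segment. In all of these cases the Zariski closure of $\gamma$ is an affine image of the full moment curve, of the full parabola, or of a line, that is, an irreducible \emph{unbounded} algebraic curve. Since $\gamma$ is an infinite subset of the irreducible component $C_0$ of $\Gamma$ containing it, that component coincides with this unbounded curve. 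But $C_0\subseteq\Gamma\subseteq S$ is bounded, a contradiction, so no non-trivial self-affine set can exist.

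The routine verifications (affine images of surfaces are surfaces of equal degree, the diameter estimate, finiteness of the singular locus of an algebraic curve) I would relegate to the preliminaries. The main obstacle is the bookkeeping at the interface of the global algebraic picture and the local analytic classification: one must be sure that, after intersecting with $f(S)$ and localising, there is still an honest non-trivial self-affine set sitting inside a single real-analytic arc to which Theorem \ref{thm:main} applies cleanly. The decisive point, and the place where compactness is used, is that the moment curve, the parabola, and lines are all unbounded, whereas $S$ is compact.
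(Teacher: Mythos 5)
Your reduction to the curve case has a genuine gap at its very first step. You claim that because $f(S)\ne S$, ``the relevant part of $\Gamma := S\cap f(S)$'' is one-dimensional, handling only the case of a shared irreducible \emph{two}-dimensional component. But in this paper an algebraic surface is the zero set of a polynomial on $\R^n$, i.e.\ a hypersurface: its irreducible components can have dimension up to $n-1$, and two \emph{distinct} irreducible hypersurface components intersect in dimension up to $n-2$. So for $n\ge 4$ the set $S\cap f(S)$ (or $V\cap V'$ in your case analysis) need not be a curve at all, and the sentence ``in every case $f(E)$ lands inside a compact real algebraic curve'' is unjustified. Even in $\R^3$ the argument is incomplete: $f(E)$ may be spread over several irreducible components of $\Gamma$, so you would still need a Baire-type argument (as in the paper's proof of Theorem \ref{thm:main}) to place a small cylinder $f(f_w(E))$, which is itself a non-trivial self-affine set, inside a single component before you can localise. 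Repairing the proposal requires an induction that cuts the dimension of the Zariski closure of the self-affine set one step at a time, tracking how $f$ permutes irreducible components; but note that once you have that machinery, you are done without Theorem \ref{thm:main}: some power $f^p$ maps an irreducible component into itself, equality of dimensions forces $f^p(V)=V$, and since $f^p$ strictly contracts any compact set of positive diameter (compactness of $S$ enters exactly here, as the Zariski closure of $E$ sits inside $S$), $V$ must be a point --- contradicting that $E$ is infinite. In other words, the fixed version of your argument makes the detour through analytic arcs, singular loci, and the moment curve superfluous.

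For comparison, the paper's proof is entirely different and much more elementary: it sets $P_j=P\circ f^{-j}$, notes that all $P_j$ have degree at most $\deg(P)$ and hence lie in a finite-dimensional vector space, so finitely many $P_{k_1},\dots,P_{k_m}$ span all of them; then $\bigcap_{i=1}^m f^{k_i}(E)\subset\bigcap_{i=1}^m S(P_{k_i})\subset S(P_j)$ for every $j$, while $\diam(S(P_j))=\diam(f^j(S(P)))\to 0$, a contradiction since $\bigcap_{i=1}^m f^{k_i}(E)=f^{k_m}(E)$ has positive diameter. No algebraic geometry (irreducibility, dimension, singular loci) and no appeal to Theorem \ref{thm:main} is needed. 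Your closing idea --- that the Zariski closure of a moment-curve arc is unbounded and hence cannot sit inside a compact $S$ --- is correct and elegant, but as it stands it rests on the unproved one-dimensionality of the intersection.
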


It is easy to see that non-compact surfaces, such as paraboloids, can contain non-trivial self-affine sets; see Example \ref{ex:paraboloid}. To finish the article, we introduce in Proposition \ref{thm:selfaffinepoly} a sufficient condition for the inclusion of a self-affine set in an algebraic surface.

\section{Preliminaries} \label{sec:preli}

In this section, we introduce the basic concepts to be used throughout in the article. A mapping $f \colon \R^n \to \R^n$ is \emph{affine} if $f(x) = Tx + c$ for all $x \in \R^n$, where $T$ is a $n \times n$ matrix and $c \in \R^n$. The matrix $T$ is called a \emph{linear part} of $f$. It is easy to see that an affine map is invertible if and only if its linear part is non-singular. A mapping $f \colon \R^n \to \R^n$ is \emph{strictly contractive} if $|f(x)-f(y)| < |x-y|$ for all $x,y \in \R^n$. Note that an affine mapping $f$ is strictly contractive if and only if its linear part $T$ has operator norm $\| T \|$ strictly less than $1$. A non-empty compact set $E \subset \R^n$ is called \emph{self-affine} if $E = \bigcup_{i=1}^\ell f_i(E)$, where $\{ f_i \}_{i=1}^\ell$ is an \emph{affine iterated function system (IFS)}, i.e.\ a finite collection of strictly contractive invertible affine maps $f_i \colon \R^n \to \R^n$; see \cite{Hutchinson1981}. Moreover, $E$ is called \emph{self-similar} if all the $f_i$'s are similitudes. We say that a self-affine set is \emph{non-trivial} if it is not a singleton.

If $a<b$, then a non-constant continuous function $\gamma \colon [a,b] \to \R^n$ is called a \emph{curve}. We denote the set $\gamma([a,b]) \subset \R^n$ by $\image(\gamma)$ and refer to it also as a \emph{curve}. By saying that a curve $\gamma$ contains a set $A$ we obviously mean that $A \subset \image(\gamma)$. A curve $\gamma$ is \emph{simple} if $\gamma(s) \ne \gamma(t)$ for $a \le s<t<b$. We say that a curve $\gamma \colon [a,b] \to \R^n$, $\gamma(t) = (x_1(t),\ldots,x_n(t))$, is \emph{analytic} if $x_i \colon [a,b] \to \R$ is continuous on $[a,b]$ and real analytic on $(a,b)$ for all $i \in \{ 1,\ldots,n \}$. Recall that a function is real analytic on an open set $U \subset \R$ if, at any point $t \in U$, it can be represented by a convergent power series on some interval of positive radius centered at $t$. Similarly, if $x_i$'s are $C^k$ functions for some $k \in \N$, then the curve $\gamma$ is called \emph{$C^k$ curve}. The $k$-th derivative of a $C^k$ curve $\gamma$ is $\gamma^{(k)}(t) = (x_1^{(k)}(t),\ldots,x_n^{(k)}(t))$.
If $f \colon \R^n \to \R^n$ is an invertible affine mapping and $\gamma \colon [a,b] \to \R^n$ is a curve, then $f \circ \gamma$ is the \emph{affine image} of the curve.

Let $P \colon \R^n \to \R$ be a non-constant polynomial with real coefficients. The set
\begin{equation*}
  S(P) = \{ x \in \R^n : P(x)=0 \}
\end{equation*}
is called an \emph{algebraic surface}. The \emph{degree} of $P$, denoted by $\deg(P)$, is the highest degree of its terms, when $P$ is expressed in canonical form. The degree of a term is the sum of the exponents of the variables that appear in it.

\section{Self-affine sets and analytic curves}

In this section, we prove Theorem \ref{thm:main}. Our arguments are inspired by the proof of \cite[Theorem 3(i)]{BandtKravchenko2011}. We will first show that an affine image of $\eta \colon [c,d] \to \R^n$, $\eta(t) = (t,t^2,\ldots,t^n)$, contains a non-trivial self-affine set. This follows immediately from the following lemma.

\begin{lemma} \label{thm:parabola-self-affine}
  If $\eta \colon [c,d] \to \R^n$, $\eta(t) = (t,t^2,\ldots,t^n)$, then $\image(\eta)$ is a non-trivial self-affine set for all $c<d$.
\end{lemma}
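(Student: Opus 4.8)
The plan is to exploit the algebraic self-similarity of the moment curve under affine reparametrizations of the parameter $t$. The key observation is that for any $\alpha,\beta\in\R$ the substitution $t\mapsto\alpha t+\beta$ in $\eta$ is realized by an affine map of $\R^n$ applied to $\eta(t)$. Concretely, expanding each coordinate by the binomial theorem,
\[
  (\alpha t+\beta)^k=\beta^k+\sum_{j=1}^{k}\binom{k}{j}\alpha^{j}\beta^{k-j}t^{j},
\]
so that $\eta(\alpha t+\beta)=L_{\alpha,\beta}\,\eta(t)+\eta(\beta)$, where $L_{\alpha,\beta}$ is the lower-triangular matrix with entries $(L_{\alpha,\beta})_{kj}=\binom{k}{j}\alpha^{j}\beta^{k-j}$ for $1\le j\le k\le n$ and zero otherwise. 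Writing $g_{\alpha,\beta}(x)=L_{\alpha,\beta}x+\eta(\beta)$, we thus have $g_{\alpha,\beta}\circ\eta=\eta\circ(\alpha t+\beta)$. Since $L_{\alpha,\beta}$ is triangular with diagonal $(\alpha,\alpha^2,\ldots,\alpha^n)$, its determinant is $\alpha^{n(n+1)/2}$, so $g_{\alpha,\beta}$ is invertible whenever $\alpha\neq 0$; this settles the invertibility requirement in the definition of an affine IFS.

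Next I would arrange the covering. Fix $N\in\N$ and partition $[c,d]$ into $N$ equal subintervals with endpoints $t_i=c+i(d-c)/N$. The increasing affine bijection $\phi_i$ of $[c,d]$ onto $[t_{i-1},t_i]$ has the form $\phi_i(t)=\tfrac1N t+\beta_i$, and the shift $\beta_i=t_{i-1}-c/N$ stays uniformly bounded (it lies in a fixed interval depending only on $c,d$) as $N\to\infty$. Setting $f_i:=g_{1/N,\beta_i}$, the identity above gives $f_i(\image(\eta))=\eta([t_{i-1},t_i])$, and since the subintervals cover $[c,d]$ we obtain $\bigcup_{i=1}^{N}f_i(\image(\eta))=\image(\eta)$. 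Thus $\{f_i\}_{i=1}^N$ is a candidate IFS whose attractor set equation is automatically the desired equality.

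The main obstacle, and the only point requiring real care, is verifying strict contractivity, i.e.\ $\|L_{1/N,\beta_i}\|<1$. The subtlety is that the off-diagonal entries involve powers of $\beta_i$, which do not tend to zero; a priori these could be large. The resolution is that \emph{every} entry of $L_{\alpha,\beta}$ carries a factor $\alpha^{j}$ with $j\ge 1$ — precisely because the coordinates of $\eta$ begin at $t^1$ and omit the constant term $t^0$. Hence, with $\alpha=1/N$ and the $\beta_i$ bounded by some $B$ independent of $N$, each entry satisfies $|(L_{1/N,\beta_i})_{kj}|=\binom{k}{j}N^{-j}|\beta_i|^{k-j}\le C N^{-1}$ for a constant $C=C(n,B)$. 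Consequently $\|L_{1/N,\beta_i}\|\le C'N^{-1}\to 0$ uniformly in $i$, so one may fix $N$ large enough that $\|L_{1/N,\beta_i}\|<1$ for every $i$; each $f_i$ is then strictly contractive.

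With $N$ so chosen, $\{f_i\}_{i=1}^N$ is a finite collection of strictly contractive invertible affine maps for which $\image(\eta)=\bigcup_{i=1}^N f_i(\image(\eta))$, so $\image(\eta)$ is self-affine by definition. Finally, since $c<d$ forces $\eta(c)$ and $\eta(d)$ to differ already in the first coordinate, $\image(\eta)$ is not a singleton and the self-affine set is non-trivial, completing the proof.
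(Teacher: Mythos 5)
Your proof is correct and is essentially the same construction as the paper's: both lift a one-dimensional affine IFS on the parameter interval $[c,d]$ to an affine IFS on $\R^n$ via the binomial theorem, producing the same lower-triangular matrices (the paper's entries $\lambda^k\binom{k}{j}(t_i/\lambda-c)^{k-j}$ equal $\binom{k}{j}\lambda^j(t_i-\lambda c)^{k-j}$, i.e.\ your $L_{\alpha,\beta}$ with $\alpha=\lambda$ and $\beta=t_i-\lambda c$). The only cosmetic difference is that the paper fixes a small common ratio $\lambda$ first and then chooses translates $t_i$ so that the one-dimensional attractor is $[c,d]$, whereas you partition $[c,d]$ into $N$ equal subintervals and take $N$ large to force contractivity.
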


\begin{proof}
  Let
  \begin{equation*}
    0 < \lambda < (2^n\sqrt{n}\max\{ (2|c|+1)^n,(|c|+|d|+1)^n \})^{-1} < 1
  \end{equation*}
  and choose $t_1,\ldots,t_\ell \in [c,d]$ with $\ell \in \N$ such that the self-similar set of $\{ x \mapsto \lambda(x-c) + t_i \}_{i=1}^\ell$ is $[c,d]$. Write $c_{i,k,j} = \binom{k}{j} (\frac{t_i}{\lambda}-c)^{k-j}$ and observe that
  \begin{equation*}
    \Bigl( t - \Bigl( c-\frac{t_i}{\lambda} \Bigr) \Bigr)^k = \sum_{j=1}^k c_{i,k,j} \Bigl( t^j - \Bigl( c-\frac{t_i}{\lambda} \Bigr)^j \Bigr)
  \end{equation*}
  for all $k \in \{ 1,\ldots,n \}$, $i \in \{ 1,\ldots,\ell \}$, and $t \in \R$.

  Defining for each $i \in \{ 1,\ldots,\ell \}$ a lower-triangular matrix by
  \begin{equation*}
    T_i =
    \begin{pmatrix}
      \lambda c_{i,1,1}   & 0                   & 0                   & \cdots & 0 \\
      \lambda^2 c_{i,2,1} & \lambda^2 c_{i,2,2} & 0                   & \cdots & 0 \\
      \lambda^3 c_{i,3,1} & \lambda^3 c_{i,3,2} & \lambda^3 c_{i,3,3} & \cdots & 0 \\
      \vdots              & \vdots              & \vdots              & \ddots & \vdots \\
      \lambda^n c_{i,n,1} & \lambda^n c_{i,n,2} & \lambda^n c_{i,n,3} & \cdots & \lambda^n c_{i,n,n}
    \end{pmatrix},
  \end{equation*}
  we see, by the choice of $\lambda$ and the fact that $t_i \in [c,d]$, that
  \begin{align*}
    \| T_i \| &\le \sqrt{n} \max_{k \in \{ 1,\ldots,n \}} \sum_{j=1}^k |\lambda^kc_{i,k,j}| = \sqrt{n} \max_{k \in \{ 1,\ldots,n \}} \sum_{j=1}^k \lambda^k \binom{k}{j} \Bigl| \frac{t_i}{\lambda}-c \Bigr|^{k-j} \\
    &\le \sqrt{n} \max_{k \in \{ 1,\ldots,n \}} \sum_{j=1}^k \lambda^j \binom{k}{j} (|t_i|+|c|+1)^{k-j} \le \lambda\sqrt{n} \max_{k \in \{ 1,\ldots,n \}} (|t_i|+|c|+1)^k 2^k < 1.
  \end{align*}
  Therefore, the affine map $f_i \colon \R^n \to \R^n$ defined by
  \begin{equation*}
    f_i(x_1,\ldots,x_n) = T_i(x_1,\ldots,x_n) - T_i\Bigl( c-\frac{t_i}{\lambda},\Bigl( c-\frac{t_i}{\lambda} \Bigr)^2,\ldots,\Bigl( c-\frac{t_i}{\lambda} \Bigr)^n \Bigr)
  \end{equation*}
  is contractive and satisfies
  \begin{align*}
    f_i(t,t^2,\ldots,t^n) &= T_i\Bigl( t - \Bigl( c-\frac{t_i}{\lambda} \Bigr), t^2 - \Bigl( c-\frac{t_i}{\lambda} \Bigr)^2, \ldots, t^n - \Bigl( c-\frac{t_i}{\lambda} \Bigr)^n \Bigr) \\
    &= \Bigl( \lambda\Bigl( t - \Bigl( c-\frac{t_i}{\lambda} \Bigr) \Bigr), \lambda^2\Bigl( t - \Bigl( c-\frac{t_i}{\lambda} \Bigr) \Bigr)^2, \ldots, \lambda^n\Bigl( t - \Bigl( c-\frac{t_i}{\lambda} \Bigr) \Bigr)^n \Bigr) \\
    &= (\lambda(t-c)+t_i, (\lambda(t-c)+t_i)^2, \ldots, (\lambda(t-c)+t_i)^n)
  \end{align*}
  for all $t \in [c,d]$. Hence the self-affine set of $\{ f_i \}_{i=1}^\ell$ is the curve $\image(\eta)$.
\end{proof}

\begin{remark}
  The key fact implicitly used in the above proof is that $\eta(t) = (t, t^2,\ldots, t^n)$ defined on $\R$ is invariant under homotheties $\diag(s,s^2,\ldots,s^n)$ and translations $(t,t^2,\ldots,t^n) \mapsto (t-a,(t-a)^2,\ldots,(t-a)^n)$.
\end{remark}

Let us next focus on the opposite claim.

\begin{theorem}
  If an analytic curve which cannot be embedded in a hyperplane contains a non-trivial self-affine set, then it is an affine image of $\eta \colon [c,d] \to \R^n$, $\eta(t) = (t,t^2,\ldots,t^n)$, for some $c<d$.
\end{theorem}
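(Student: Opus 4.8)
The plan is to distil the self-affine structure down to a single affine contraction that maps the curve into itself near its fixed point, to convert this into a functional equation, and then to read the shape of the curve off that equation. Write $E=\bigcup_{i=1}^\ell f_i(E)$ with $f_i(x)=T_ix+c_i$, and let $\gamma\colon[a,b]\to\R^n$ be the analytic curve with $E\subset\image(\gamma)$. Since $E$ is non-trivial it is infinite, so $\ell\ge 2$, and the fixed points of the compositions $g=f_{i_1}\circ\cdots\circ f_{i_m}$ (each an invertible contraction with linear part $T_g$ and with $g(E)\subset E$) are infinite in number and dense in $E$. For any such $g$ the unique fixed point $p_g$ lies in $E\subset\image(\gamma)$, and, choosing a point $y\in E\setminus\{p_g\}$, the iterates $g^k(y)\in g(E)$ converge to $p_g$; hence $p_g$ is an accumulation point of the infinite set $g(E)\subset\image(\gamma)$.

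The first and main step is to promote this accumulation into an exact local identity of curves. I would fix a $g$ whose fixed point $p_g=\gamma(\tau)$ is a regular point of $\gamma$ (so $\gamma'(\tau)\neq0$); such a $g$ exists by the counting argument below. Points of $g(E)$ near $p_g$ have the form $g(\gamma(s_k))=\gamma(t_k)$ with $\gamma(t_k)\to p_g$; since $p_g$ is regular and $g^{-1}(p_g)=p_g$, both $s_k$ and $t_k$ accumulate at $\tau$ after restricting to a neighbourhood of $p_g$. Choosing a linear projection $\pi\colon\R^n\to\R$ with $\pi(\gamma'(\tau))\neq0$ and $\pi(T_g\gamma'(\tau))\neq0$ (a generic $\pi$, as both vectors are nonzero), the maps $t\mapsto\gamma(t)$ and $s\mapsto g(\gamma(s))$ are, near $\tau$, graphs over the $\pi$-coordinate. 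These two graphs agree at the $\pi$-values $\pi(\gamma(t_k))\to\pi(p_g)$, an infinite set with an accumulation point, so by the identity theorem for real-analytic functions the graphs coincide on a whole neighbourhood. Hence there are an interval $I\ni\tau$ and an analytic $\psi\colon I\to[a,b]$ with
\[
  g(\gamma(t))=\gamma(\psi(t)) \quad (t\in I), \qquad \psi(\tau)=\tau .
\]
Establishing this identity — in particular ruling out that $g(\image(\gamma))$ and $\image(\gamma)$ meet infinitely often only tangentially or at critical points, and handling possible non-injectivity of $\gamma$ — is the crux of the argument and the step where analyticity is genuinely used; the remaining steps are comparatively routine.

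With the functional equation in hand I would analyse it at the fixed point. Differentiating at $\tau$ gives $T_g\gamma'(\tau)=\psi'(\tau)\gamma'(\tau)$, so $\lambda:=\psi'(\tau)$ is a real eigenvalue of $T_g$ with eigenvector $\gamma'(\tau)$ and $0<|\lambda|<1$ (since $\|T_g\|<1$ and $T_g$ is invertible). By Koenigs' linearization theorem there is an analytic change of parameter $u=h(t)$, $h(\tau)=0$, conjugating $\psi$ to $u\mapsto\lambda u$. Writing $\tilde\gamma(u)=\gamma(h^{-1}(u))$ and $\delta(u)=\tilde\gamma(u)-p_g=\sum_{k\ge1}A_ku^k$ (no constant term, as $\delta(0)=0$), the equation becomes $T_g\delta(u)=\delta(\lambda u)$, i.e.\ $T_gA_k=\lambda^kA_k$ for every $k$. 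The numbers $\lambda^k$ are pairwise distinct, so the nonzero coefficients $A_k$ are eigenvectors of $T_g$ for distinct eigenvalues; there are at most $n$ of them, and since $\image(\gamma)$ spans no hyperplane they must number exactly $n$. Thus in suitable affine coordinates $\tilde\gamma(u)=(u^{k_1},\dots,u^{k_n})$ for some exponents $1\le k_1<\cdots<k_n$.

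It remains to force $k_m=m$. One checks that, among $j=1,\dots,n$, the derivative $\tilde\gamma^{(j)}(0)$ is nonzero exactly when $j\in\{k_1,\dots,k_n\}$, and the nonzero ones point along distinct coordinate axes; hence $p_g$ is a non-degenerate point of $\gamma$ (meaning $\gamma'(\tau),\dots,\gamma^{(n)}(\tau)$ are linearly independent) if and only if $\{k_1,\dots,k_n\}=\{1,\dots,n\}$, i.e.\ if and only if $\tilde\gamma$ is an affine image of $\eta$. Now $W(t)=\det(\gamma'(t),\dots,\gamma^{(n)}(t))$ is real-analytic and not identically zero — otherwise $\image(\gamma)$ would lie in a hyperplane — so it has only finitely many zeros in $[a,b]$, and likewise $|\gamma'|$ has only finitely many. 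Since the fixed points $p_g$ are infinite in number, at least one of them avoids these finitely many bad parameters, i.e.\ is both regular and non-degenerate; this is the $g$ I select in the second step. For that $g$ the analysis yields that $\gamma$ coincides with an affine image of $\eta$ on an arc, and since both are analytic curves they agree on all of $[a,b]$. This proves the theorem, with the local curve-coincidence of the second paragraph being the only substantial obstacle.
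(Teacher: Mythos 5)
Your strategy coincides with the paper's up through the functional equation: pick a semigroup element $g$ whose fixed point $p_g=\gamma(\tau)$ lies on the curve, use infinitely many points of $g(E)$ accumulating at $p_g$ together with the identity theorem to get $g(\gamma(t))=\gamma(\psi(t))$ on a neighbourhood of $\tau$ (this is the paper's Fact 1, proved there by reparametrizing by the first coordinate after normalizing $\gamma'(t_0)=e_1$). After that you genuinely diverge, and your route is more conceptual. Koenigs' linearization of $\psi$ turns the equation into $T_g\delta(u)=\delta(\lambda u)$, hence $T_gA_k=\lambda^kA_k$ for every Taylor coefficient; since the $\lambda^k$ are pairwise distinct, the nonzero $A_k$ are eigenvectors for distinct eigenvalues, so at most $n$ are nonzero, exactly $n$ by the hyperplane hypothesis, and the curve is locally an affine image of $(u^{k_1},\ldots,u^{k_n})$. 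This replaces in one stroke the paper's real-Jordan-form case analysis (Fact 2) and its separate monomiality argument (Fact 3, the trick with $(x_k^*)^{(p_k)}(\lambda^j x_1^*)$). Likewise, choosing the fixed point a priori at a non-degenerate parameter, where $\det(\gamma',\ldots,\gamma^{(n)})\neq 0$, forces $\{k_1,\ldots,k_n\}=\{1,\ldots,n\}$ outright, replacing the paper's second pass of the whole argument at a second fixed point $t_1\neq 0$ and its polynomial-degree comparison. What the paper's version buys is that it needs no genericity selection at all — it works at an arbitrary fixed point and never invokes any Wronskian-type fact; what yours buys is a shorter algebraic core with no case analysis.

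Two steps, however, are genuine gaps. First, the non-injectivity problem you yourself call ``the crux'' is never resolved, and your argument does need it: to place the intersection points on the two local graphs you must have $s_k\to\tau$ and $t_k\to\tau$, and ``restricting to a neighbourhood of $p_g$'' does not give this for a non-simple curve, since another branch of $\image(\gamma)$ can enter every neighbourhood of $p_g$, so points of $E$ arbitrarily close to $p_g$ may carry parameters far from $\tau$. The paper disposes of this before anything else: cover $(a,b)$ by countably many intervals $I_i$ on which $\gamma$ is injective; by Baire category there is an open $U$ with $\emptyset\neq E\cap U\subset\gamma(I_i)$, and $E\cap U$ contains a smaller non-trivial self-affine set (an image $f_w(E)$ with $w$ long), so one may assume $\gamma$ simple from the start. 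Some such reduction must be inserted for your second paragraph to be valid. Second, your selection of a good fixed point rests on two unproved claims: (i) that $\det(\gamma',\ldots,\gamma^{(n)})\not\equiv 0$ whenever the analytic curve is not contained in a hyperplane — this is true, but it is itself a small theorem (if the first $r<n$ derivatives span a fixed $r$-plane on an open set, a moving-frame/ODE argument plus the identity theorem traps the whole curve in a translate of that plane), so it should be proved or cited; and (ii) that this determinant has only finitely many zeros in $[a,b]$ — this is not justified, because analyticity is assumed only on $(a,b)$ and the zeros may accumulate at the endpoints. The repair for (ii) is to use that the zero set is discrete in $(a,b)$, that $E$ is perfect and hence uncountable (so some point of $E$ sits at a good interior parameter), and that fixed points of semigroup elements are dense in $E$, which together with simplicity of $\gamma$ produces a fixed point at a good parameter; the bare count ``infinitely many fixed points versus finitely many bad parameters'' is not enough as stated.
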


\begin{proof}
  Let $\gamma \colon [a,b] \to \R^n$ be an analytic curve such that $\image(\gamma)$ is not contained in a hyperplane. Suppose that $E$ is a non-trivial self-affine set of an affine IFS $\{ f_i \}_{i=1}^\ell$ such that $E \subset \image(\gamma)$. Let $\mathcal{S}$ be the semigroup generated by $f_1,\ldots,f_\ell$ under composition.

  By analyticity and the assumption that $\image(\gamma)$ is not contained in a hyperplane, without loss of generality, we may assume that $E \subset \gamma((a,b))$ and $\gamma'(t) \ne 0$ for all $t \in (a,b)$. Since $(a,b)$ has a countable cover of open intervals $I_i$ such that $\gamma(I_i)$ has no intersection points, we have $E \subset \bigcup_i E \cap \gamma(I_i)$ and therefore, by the Baire Category Theorem, there exist $i$ and an open set $U$ such that $\emptyset \ne E \cap U \subset E \cap \gamma(I_i)$. Since $E \cap U$ contains a non-trivial self-affine set, we see that no generality is lost if we assume the curve $\gamma$ to be simple.

  Fix $\fii \in \mathcal{S}$ and write
  \begin{equation} \label{eq:dj1}
    \fii(x) = M(x-x_0)+x_0
  \end{equation}
  for all $x \in \R^n$, where $x_0 \in \R^n$ is the fixed point of $\fii$ and $M$ is an $n \times n$ invertible matrix. Note that $x_0 \in E$. Since $E \subset \gamma((a,b))$ there exists $t_0 \in (a,b)$ such that $x_0 = \gamma(t_0)$. Hence we may rewrite \eqref{eq:dj1} as
  \begin{equation} \label{eq:dj2}
    \fii(x) = M(x-\gamma(t_0)) + \gamma(t_0).
  \end{equation}
  Since $E$ is non-trivial, there exists a sequence $(t_i)_{i \in \N}$ of distinct numbers in $(a,b)$ such that $t_i \to t_0$ as $i \to \infty$ and $\gamma(t_i) \in E$ for all $i \in \N$. Furthermore, since $\fii(E) \subset E \subset \gamma((a,b))$, we see that $\fii(\gamma(t_i)) \in \image(\gamma)$ and therefore, for each $i \in \N$ there exists $t_i' \in (a,b)$ such that
  \begin{equation} \label{eq:dj3}
    \fii(\gamma(t_i)) = \gamma(t_i').
  \end{equation}
  Recalling that $\gamma$ is simple and $\fii(\gamma(t_0)) = \gamma(t_0)$, we see that $t_i' \to t_0$ as $i \to \infty$. By \eqref{eq:dj2} and \eqref{eq:dj3}, we have
  \begin{equation} \label{eq:dj4}
    M(\gamma(t_i)-\gamma(t_0)) = \fii(\gamma(t_i))-\gamma(t_0) = \gamma(t_i')-\gamma(t_0)
  \end{equation}
  and therefore,
  \begin{equation*}
    M\biggl( \frac{\gamma(t_i)-\gamma(t_0)}{t_i-t_0} \biggr) = \frac{\gamma(t_i')-\gamma(t_0)}{t_i'-t_0} \cdot \frac{t_i'-t_0}{t_i-t_0}.
  \end{equation*}
  Letting $i \to \infty$, we have
  \begin{equation} \label{eq:lambda_1}
    M\gamma'(t_0) = \lambda\gamma'(t_0),
  \end{equation}
  where $\lambda = \lim_{i \to \infty} (t_i'-t_0)/(t_i-t_0) \ne 0$ by the invertibility of $M$.

  Let $J$ be an invertible matrix such that
  \begin{equation*}
    J^{-1}\gamma'(t_0) = (1,0,\ldots,0)
  \end{equation*}
  and
  \begin{equation*}
    J^{-1}MJ =
    \begin{pmatrix}
      A_1 & 0   & \cdots & 0 \\
      0   & A_2 & \cdots & 0 \\
      \vdots & \vdots & \ddots & \vdots \\
      0 & 0 & \cdots & A_m
    \end{pmatrix}
  \end{equation*}
  is a real canonical Jordan form of $M$. Write $A = J^{-1}MJ$ and recall that if $\lambda_i$ is a real eigenvalue of $M$, then
  \begin{equation*}
    A_i =
    \begin{pmatrix}
      \lambda_i & 1 & 0 & \cdots & 0 & 0 \\
      0 & \lambda_i & 1 & \cdots & 0 & 0 \\
      0 & 0 & \lambda_i & \cdots & 0 & 0 \\
      \vdots & \vdots & \vdots & \ddots & \vdots & \vdots \\
      0 & 0 & 0 & \cdots & \lambda_i & 1 \\
      0 & 0 & 0 & \cdots & 0 & \lambda_i
    \end{pmatrix},
  \end{equation*}
  and if $\lambda_i$ is a non-real eigenvalue of $M$ with real part $a_i$ and imaginary part $b_i$, then
  \begin{equation*}
    A_i =
    \begin{pmatrix}
      C_i & I & 0 & \cdots & 0 & 0 \\
      0 & C_i & I & \cdots & 0 & 0 \\
      0 & 0 & C_i & \cdots & 0 & 0 \\
      \vdots & \vdots & \vdots & \ddots & \vdots & \vdots \\
      0 & 0 & 0 & \cdots & C_i & I \\
      0 & 0 & 0 & \cdots & 0 & C_i
    \end{pmatrix},
  \end{equation*}
  where
  \begin{equation*}
    C_i =
    \begin{pmatrix}
      a_i & b_i \\ -b_i & a_i
    \end{pmatrix}
    \quad \text{and} \quad
    I =
    \begin{pmatrix}
      1 & 0 \\ 0 & 1
    \end{pmatrix}.
  \end{equation*}
  Note that by \eqref{eq:lambda_1}, we have $\lambda_1 = \lambda \in \R$. Moreover by \eqref{eq:dj4},
  \begin{equation} \label{eq:dj5}
    AJ^{-1}(\gamma(t_i)-\gamma(t_0)) = J^{-1}(\gamma(t_i')-\gamma(t_0))
  \end{equation}
  for all $i \in \N$.

  Defining $\tilde\gamma \colon [a,b] \to \R^n$ by
  \begin{equation}
  \label{e-new}
    \tilde\gamma(t) = J^{-1}(\gamma(t)-\gamma(t_0)),
  \end{equation}
  we clearly have $\tilde\gamma(t_0)=0$ and $\tilde\gamma'(t_0) = J^{-1}\gamma'(t_0) = (1,0,\ldots,0)$. Write $\tilde\gamma(t) = (\tilde x_1(t),\ldots,\tilde x_n(t))$. Then $\tilde x_1(t_0) =0$ and $\tilde x_1'(t_0) = 1 \ne 0$. By the inverse function theorem,  the function $\tilde x_1(t)$ has a local inverse $t=t(\tilde{x}_1)$ which is analytic on $(-\eps,\eps)$ for some $\eps>0$. Write $x_1^*=\tilde{x}_1$ and $x_k^*(x_1^*)=\tilde x_k(t(x_1^*))$ for $k\in \{ 2,\ldots,n \}$. Clearly  $ x_k^*(\cdot)$ is  analytic on $(-\eps,\eps)$ for all $k \in \{ 2,\ldots,n \}$. Note that
  \begin{equation} \label{e-new1}
    x_k^*(0) =\tilde x_k(t_0)= 0, \quad   (x_k^*)'(0)=\tilde x_k'(t_0)\cdot t'(0)=0
  \end{equation}
  for all $k \in \{ 2,\ldots,n \}$ and $x_2^*, \ldots,  x_n^*$ are not constant functions. Indeed, if $x_k^*$ was  constant for some $k$, then so is $\tilde x_k$; by the fact that each $\tilde x_k$ is a linear combination of $x_1, \ldots, x_n$ (see \eqref{e-new}), the curve $\gamma$ would be contained in a hyperplane in $\R^n$, leading to a contradiction.
  Let $\xi \colon (-\eps,\eps) \to \R^n$ be defined by
  \begin{equation} \label{eq:curve-eta}
    \xi( x_1^*) = (x_1^*, x_2^*( x_1^*),\ldots, x_n^*( x_1^*)).
  \end{equation}
  Then $\xi$ is a re-parametrization of the curve $\tilde\gamma$ restricted on a neighborhood of $t_0$. The goal of the proof is to show that an affine image of  the curve $\xi$ will be of the claimed form.

  Let us next collect three facts related to the above defined setting.

  \begin{fact} \label{fact1}
    Write $A = (a_{ij})_{1 \le i,j \le n}$ and let $Y = a_{11}x_1^*+\sum_{j=2}^n a_{1j}  x_j^*(x_1^*)$. Then
    \begin{equation} \label{eq:dj7}
      A(x_1^*, x_2^*(x_1^*),\ldots, x_n^*(x_1^*) )= (Y, x_2^*(Y),\ldots, x_n^*(Y))
    \end{equation}
    for all $x_1^* \in (-\eps,\eps)$.
  \end{fact}

  \begin{myproof}
    By \eqref{eq:dj5}, $A \tilde{\gamma}(t_i)= \tilde{\gamma}(t_i')$ for all $i\in \N$. Hence the equality \eqref{eq:dj7} holds for infinitely many different values of $x_1^*$ in a small closed neighborhood of $0$. By analyticity, \eqref{eq:dj7} holds on the whole interval $(-\eps,\eps)$.
  \end{myproof}

  The next fact concerns the shape of the matrix $A$.

  \begin{fact} \label{fact2}
    The matrix $A$ is diagonal. In other words, all the block matrices $A_i$ have dimension $1$.
  \end{fact}

  \begin{myproof}
    Let us first show that $A_1$ has dimension $1$. Suppose to the contrary that $d_1 = \dim(A_1)>1$. Since the eigenvalue associated to $A_1$ is $\lambda \in \R$, we have
    \begin{equation*}
      A_1 =
      \begin{pmatrix}
        \lambda & 1 & \cdots & 0 & 0 \\
        0 & \lambda & \cdots & 0 & 0 \\
        \vdots & \vdots & \ddots & \vdots & \vdots \\
        0 & 0 & \cdots & \lambda & 1 \\
        0 & 0 & \cdots & 0 & \lambda
      \end{pmatrix}.
    \end{equation*}
    By Fact \ref{fact1}, we see that
    \begin{equation} \label{eq:dj8}
      \lambda  x_{d_1}^*( x_1^*) =  x_{d_1}^*(\lambda  x_1^* +  x_2^*(x_1^*)).
    \end{equation}
    By \eqref{e-new1} and the fact that $x_k^*$, $k\in \{ 2,\ldots, n \}$, is not a constant, there exist integers $p_2,\ldots,p_n \ge 2$ and reals $c_2,\ldots,c_n \ne 0$ such that for each $k \in \{ 2,\ldots,n \}$
    \begin{equation} \label{eq:dj9}
      x_k^*(x_1^*) = c_k( x_1^*)^{p_k} + o( x_1^*)^{p_k}
    \end{equation}
    as $x_1^* \to 0$. Plugging \eqref{eq:dj9} into \eqref{eq:dj8}, and comparing the coefficients of Taylor series in $x_1^*$ on both sides, we get
    \begin{equation*}
      \lambda c_{d_1} = c_{d_1} \lambda^{p_{d_1}}
    \end{equation*}
    which implies that $p_{d_1} = 1$, a contradiction. Hence we have $\dim(A_1) = 1$ and therefore $Y = \lambda  x_1^*$.

    Let us next assume inductively that for some $k \in \{ 1,\ldots,n-1 \}$ the matrices $A_1,\ldots,A_k$ are of dimension $1$ and show that $\dim(A_{k+1})=1$. Suppose to the contrary that $d = \dim(A_{k+1}) > 1$. Now there are two cases: either $\lambda_{k+1}$ is real or not. First suppose that $\lambda_{k+1}$ is real. Let $\ell = k+d$. By \eqref{eq:dj7} we have
    \begin{align}
    \lambda_{k+1}x_{\ell-1}^*(x_1^*)+x_{\ell}^*( x_1^*)&=x_{\ell-1}^*(\lambda x_1^*), \label{e-l1}\\
    \lambda_{k+1}x_{\ell}^*(x_1^*)&=x_{\ell}^*(\lambda x_1^*). \label{e-l2}
    \end{align}
    Plugging \eqref{eq:dj9} into \eqref{e-l2}, and comparing the coefficients of Taylor series in $x_1^*$ on both sides, we get $\lambda_{k+1}=\lambda^{p_{\ell}}$. Then plug \eqref{eq:dj9} into \eqref{e-l1} to obtain
    \begin{equation*}
    \lambda^{p_{\ell}} c_{\ell-1} (x_1^*)^{p_{\ell-1}}+ c_{\ell} (x_1^*)^{p_{\ell}}=\lambda^{p_{\ell-1}} c_{\ell-1}(x_1^*)^{p_{\ell-1}}+o((x_1^*)^{p_{\ell-1}}+(x_1^*)^{p_{\ell}})
    \end{equation*}
    as $x_1^*\to 0$. That is, 
    \begin{equation}\label{e-l3}
    (\lambda^{p_{\ell}}-\lambda^{p_{\ell-1}}) c_{\ell-1} (x_1^*)^{p_{\ell-1}}+ c_{\ell} (x_1^*)^{p_{\ell}}=o((x_1^*)^{p_{\ell-1}}+(x_1^*)^{p_{\ell}})
    \end{equation}
    as $x_1^*\to 0$. Since $0<|\lambda|<1$ and $c_{\ell-1}, c_{\ell}\neq 0$,   one easily derives a contradiction from \eqref{e-l3} by considering the cases $p_{\ell}< p_{\ell-1}$, $p_{\ell}=p_{\ell-1}$, and $p_{\ell}>p_{\ell-1}$ separately.

  Hence we may assume that $\lambda_{k+1} = a+ib$ with $b \ne 0$. The matrix $A_{k+1}$ is therefore of the form
    \begin{equation*}
      A_{k+1} =
      \begin{pmatrix}
        a & b & 1 & 0 & \cdots & 0 & 0 \\
        -b & a & 0 & 1 & \cdots & 0 & 0 \\
        0 & 0 & a & b & \cdots & 0 & 0 \\
        0 & 0 & -b & a & \cdots & 0 & 0 \\
        \vdots & \vdots & \vdots & \vdots & \ddots & \vdots & \vdots \\
        0 & 0 & 0 & 0 & \cdots & a & b \\
        0 & 0 & 0 & 0 & \cdots & -b & a
      \end{pmatrix}.
    \end{equation*}
    Again let $\ell = k+d$. Applying \eqref{eq:dj7}, we see that
    \begin{align*}
      a x_{\ell-1}^*(x_1^*) + b x_\ell^*(x_1^*) &=  x_{\ell-1}^*(\lambda x_1^*), \\
      -bx_{\ell-1}^*(x_1^*) + a x_\ell^*(x_1^*) &=  x_\ell^*(\lambda x_1^*).
    \end{align*}
   Using the above identities and  comparing the coefficients of $(x_1^*)^{p_\ell}$ and $(x_1^*)^{p_{\ell-1}}$ in the Taylor expansions of $x_\ell^*$ and $x_{\ell-1}^*$, we see that $p_\ell = p_{\ell-1}$; and moreover,
    \begin{align*}
      ac_{\ell-1} + bc_\ell &= c_{\ell-1}\lambda^{p_\ell}, \\
      -bc_{\ell-1} + ac_\ell &= c_\ell \lambda^{p_\ell},
    \end{align*}
    or, equivalently,
    \begin{equation*}
      \begin{pmatrix}
        a & b \\ -b & a
      \end{pmatrix}
      \begin{pmatrix}
        c_{\ell-1} \\ c_\ell
      \end{pmatrix}
      = \lambda^{p_\ell}
      \begin{pmatrix}
        c_{\ell-1} \\ c_\ell
      \end{pmatrix}.
    \end{equation*}
    This means that the real number $\lambda^{p_\ell}$ is an eigenvalue of the above matrix, a contradiction.
  \end{myproof}

  By Fact \ref{fact2}, we may now write
  \begin{equation} \label{eq:A-diag}
    A = \diag(\lambda_1,\lambda_2,\ldots,\lambda_n),
  \end{equation}
  where $\lambda_1=\lambda \in (-1,1) \setminus \{ 0 \}$. With this observation, we can examine how the curve $\xi$ defined in \eqref{eq:curve-eta} looks like.

  \begin{fact} \label{fact3}
    There exist integers $2\leq p_2 < p_3 < \cdots < p_n$ such that a piece of the curve $\image(\gamma)$, namely $\gamma \colon (t_0-\delta_1,t_0+\delta_2) \to \R^n$ for some $\delta_1,\delta_2>0$, is an affine image of the curve $\eta \colon (-\eps,\eps) \to \R^n$ defined by
    \begin{equation*}
      \eta(t) = (t,t^{p_2},\ldots,t^{p_n}).
    \end{equation*}
    More precisely, there exists an invertible $n\times n$ matrix $B$ such that the above defined  $\eta$ is the re-parametrization of the curve $B(\gamma(t)-\gamma(t_0))$, $t\in (t_0-\delta_1, t_0+\delta_2)$.
  \end{fact}

  \begin{myproof}
  We first examine the  curve $\xi$ defined in \eqref{eq:curve-eta}. By \eqref{eq:A-diag} and \eqref{eq:dj7}, we have for  $2\leq k\leq n$, 
  \begin{equation} \label{eq:dj10}
     x_k^*(\lambda  x_1^*) = \lambda_k  x_k^*(x_1^*)
  \end{equation}
  and hence, by \eqref{eq:dj9}, there exist integers $p_2,\ldots,p_n \ge 2$ and reals $c_2,\ldots,c_n \ne 0$ such that
  \begin{equation*}
    c_k(\lambda  x_1^*)^{p_k} = \lambda_k c_k  (x_1^*)^{p_k} + o((x_1^*)^{p_k}).
  \end{equation*}
   This implies that $\lambda_k = \lambda^{p_k}$ and thus $x_k^*(\lambda  x_1^*) = \lambda^{p_k}  x_k^*(x_1^*)$. Taking $p_k$-th derivative on both sides gives $(x_k^*)^{(p_k)}(\lambda x_1^*) = (x_k^*)^{(p_k)}(x_1^*)$. Hence $(x_k^*)^{(p_k)}(\lambda^j x_1^*) =  (x_k^*)^{(p_k)}(x_1^*)$ for all $j \in \N$. Letting $j \to \infty$, we get $(x_k^*)^{(p_k)}(x_1^*) \equiv (x_k^*)^{(p_k)}(0) = c_kp_k!$. Combining this with \eqref{eq:dj9} yields
  \begin{equation*}
     x_k^*( x_1^*) = c_k  (x_1^*)^{p_k}.
  \end{equation*}
  Since the curve $\tilde\gamma$ is not contained in a hyperplane, we see that, for any non-zero vector $(b_1,\ldots,b_n)$, the sum $\sum_{k=1}^n b_k  x_k^*$ is not identically zero. Thus the integers $p_2,\ldots,p_n$ are mutually distinct. Hence the curve $\xi: (-\epsilon, \epsilon)\to \R^n$  is of the form $\xi(x_1^*)=(x_1^*, c_2 (x_1^*)^{p_2},\ldots, c_n (x_1^*)^{p_n})$. Without confusion, we simply write $\xi(t)=(t, c_2 t^{p_2},\ldots, c_n t^{p_n})$.

  We have now proved that, possibly after a permutation on coordinate axis, the curve $\tilde \gamma \colon (t_0-\delta_1,t_0+\delta_2) \to \R^n$ for some $\delta_1,\delta_2>0$, can be re-pararemtrized by
  \begin{equation*}
    t \mapsto (t,c_2t^{p_2},\ldots,c_nt^{p_n}),  \; t\in (-\epsilon,\epsilon)
  \end{equation*}
  for some integers $2 \le p_2 < p_3 < \cdots < p_n$ and reals $c_2,\ldots,c_n \ne 0$. Applying a further affine transformation $(u_1,u_2,\ldots,u_n) \mapsto (u_1, u_2/c_2,\ldots, u_n/c_n)$, we see that  $\gamma \colon (t_0-\delta_1,t_0+\delta_2) \to \R^n$, for some $\delta_1,\delta_2>0$, is an affine image of the curve $\eta$.  This completes the proof of Fact \ref{fact3}.
  \end{myproof}

  By Fact \ref{fact3}, it suffices to show that $p_k=k$ for all $k \in \{ 2,\ldots,n \}$. Observe that $\eta \colon (-\eps,\eps) \to \R^n$ given by Fact \ref{fact3} is an analytic simple curve which cannot be embedded in a hyperplane and it contains a non-trivial self-affine set, say $F$. Then there exists $t_1\in (-\eps,\eps)\setminus\{0\}$ such that $\eta(t_1)$ is the fixed point of a mapping  of the affine IFS defining $F$.   Therefore, applying the previous argument (Fact 3) once more (in which $\gamma$ is replaced by $\eta$), we find integers $2 \le q_2 < q_3 < \cdots < q_n$ and an interval $(t_1-\delta_1', t_1+\delta_2')\subset (-\epsilon,\epsilon)$ for some some $\delta_1',\delta_2'>0$ such that, under a suitable invertible linear transformation $B'$, the curve
  \begin{equation*}
    t \mapsto B'(\eta(t)-\eta(t_1))
  \end{equation*}
  defined on $(t_1-\delta_1',t_1+\delta_2')$  can be re-parametrized by
  \begin{equation*}
    t \mapsto (t,t^{q_2},\ldots,t^{q_n}).
  \end{equation*}
  This means that, writing $B' = (b_{kj})_{1 \le k,j \le n}$, we have
  \begin{equation} \label{eq:dj12}
    \sum_{j=1}^n b_{kj} (t^{p_j}-t_1^{p_j}) = \biggl( \sum_{j=1}^n b_{1j}(t^{p_j}-t_1^{p_j}) \biggr)^{q_k}
  \end{equation}
  for all $t \in (t_1-\delta_1',t_1+\delta_2')$ and $k \in \{ 2,\ldots,n \}$, where $p_1=1$.  By analyticity, \eqref{eq:dj12} holds for all $t \in \R$.

  We will next compare the degrees of polynomials of $t$ on both sides of \eqref{eq:dj12} for all $k \in \{ 2,\ldots,n \}$. Let $d = \deg(\sum_{j=1}^n b_{1j}(t^{p_j}-t_1^{p_j})) \in \{ 1,p_2,\ldots,p_n \}$. When $k$ runs over $\{ 2,\ldots,n \}$, the degrees of the right-hand side of \eqref{eq:dj12} are $dq_2,dq_3,\ldots,dq_n$, whereas the left-hand side has degree in $\{ 1,p_2,\ldots,p_n \}$. Therefore,
  \begin{equation*}
    \{ dq_2,dq_3,\ldots,dq_n \} \subset \{ 1,p_2,\ldots,p_n \}
  \end{equation*}
  which implies that
  \begin{equation} \label{eq:dj13}
    p_k = dq_k
  \end{equation}
  for all $k \in \{ 2,\ldots,n \}$. Since $d \in \{ 1,p_2,\ldots,p_n \}$, we must have $d=1$ (otherwise, by \eqref{eq:dj13}, $q_k=1$ for some $k \in \{ 2,\ldots,n \}$ which is a contradiction). But since $d=1$, we may write \eqref{eq:dj12} as
  \begin{equation*}
    \sum_{j=1}^n b_{kj}(t^{p_j}-t_1^{p_j}) = (c(t-t_1))^{p_k}
  \end{equation*}
  for all $k \in \{ 2,\ldots,n \}$. In particular, this shows that $(t-t_1)^{p_n}$ is a linear combination of $(t-t_1), (t^{p_2}-t_1^{p_2}), \ldots, (t^{p_n}-t_1^{p_n})$. Since $t_1 \ne 0$, all powers $t^j$, $j \in \{ 1,\ldots,p_n \}$, appear in $(t-t_1)^{p_n}$ with non-degenerate coefficients, and it follows that $p_k=k$ for all $k \in \{ 2,\ldots,n \}$.
\end{proof}

\begin{remark}
  (1) Bandt and Kravchenko showed that there are plenty of $C^1$ planar self-affine curves (i.e.\ self-affine sets that are $C^1$ planar curves); see \cite[Theorem 2]{BandtKravchenko2011}. Furthermore, in \cite[Theorem 3(ii)]{BandtKravchenko2011}, they showed that parabolic arcs and straight line segments are the only simple $C^2$ planar self-affine curves. This result also follows from Theorem \ref{thm:main} by a simple modification. It would be interesting to know that if a self-affine set $E$ is contained in a $C^2$ planar curve, then does there exists an analytic curve containing $E$?

  (2) The analyticity assumption in Theorem \ref{thm:main} is well motivated since for each $k \in \N$ it is easy to construct a non-parabolic $C^k$ planar curve containing a self-affine set. To see this, start from a  piece of parabolic curve and change a small part of it so that the new  curve is $C^k$. Clearly the obtained curve still contains a self-affine set. Due to this, it would be interesting to know if there exists a self-affine set $E$ which is a subset of a strictly convex $C^2$ planar curve, but is not a subset of any parabolic curve. Also, when can a self-affine set intersect an analytic curve in a set of positive measure for some relevant measure such as the self-affine measure? In the self-conformal case, this property implies that the whole set is contained in an analytic curve; see \cite[Theorem 2.1]{Kaenmaki2003}.
\end{remark}

\section{Self-affine sets and algebraic surfaces}

In this section, we prove Theorem \ref{thm:no_self-affine} and introduce self-affine polynomials.

\begin{proof}[Proof of Theorem \ref{thm:no_self-affine}]
  Let $P \colon \R^n \to \R$ be a non-constant polynomial with real coefficients such that $S(P)$ is compact. Suppose to the contrary that there exists a non-trivial self-affine set $E$ contained in $S(P)$. Let $f$ be one of the mappings of the affine IFS defining $E$ and set $P_j = P \circ f^{-j}$ for all $j \in \N$. Observe that the degree of $P_j$ is at most $\deg(P)$. It is easy to see that $S(P_j) = f^j(S(P))$ for all $j \in \N$ and therefore $\diam(S(P_j)) \to 0$ as $j \to \infty$. By the assumption, we have $f^j(E) \subset f^j(S(P)) = S(P_j)$ for all $j \in \N$, and by the invariance, we have $f^j(E) \subset f^{j-1}(E) \subset \cdots \subset E$ for all $j \in \N$.

  Since the ring of polynomials having degree at most $\deg(P)$ is finite dimensional there exist $P_{k_1},\ldots,P_{k_m}$ such that each $P_j$ is a linear combination of these polynomials. Choose $j$ so large that
  \begin{equation*}
    \diam(S(P_j)) < \min_{i \in \{ 1,\ldots,m \}} \diam(f^{k_i}(E)) = \diam\biggl(\bigcap_{i=1}^m f^{k_i}(E)\biggr).
  \end{equation*}
  But since $P_j = \sum_{i=1}^m c_iP_{k_i}$ for some $c_i$, we have
  \begin{equation*}
    \bigcap_{i=1}^m f^{k_i}(E) \subset \bigcap_{i=1}^m S(P_{k_i}) \subset S(P_j).
  \end{equation*}
  This contradiction finishes the proof.
\end{proof}

\begin{remark}
  By slightly modifying the above argument, we can prove the following stronger result:  {\it If $S(P)$ is an algebraic surface and there exists a contractive affine map $f$ such that $S(P)$ contains the fixed point $z$ of $f$ and a non-periodic orbit $\{ f^n(x) \}$ for some $x$, then $S(P)$ is unbounded}. To see this, choose $k_1<\ldots<k_m$ so that each $P_n$ is a linear combination of  the polynomials $P_{k_1},\ldots,P_{k_m}$. If $S(P)$ is bounded, then we can pick $j$ large enough so that ${\rm diam}(S(P_j))<|z-f^{k_m}(x)|$. This is a contradiction since $S(P_j)\supset \bigcap_{i=1}^m S(P_{k_i}) \supset \{z, f^{k_m}(x)\}$.
\end{remark}

\begin{example} \label{ex:paraboloid}
  It is clear that a hyperplane can contain a non-trivial self-affine set. In this example, we show that also other kinds of non-compact algebraic surfaces  can have this property.  Let $P \colon \R^n \to \R$, $P(x_1,\ldots,x_n) = x_1^2 + \cdots + x_{n-1}^2 - x_n$, and observe that, by Lemma \ref{thm:parabola-self-affine}, the parabola $\{ (x_1,\ldots,x_n) \in \R^n : x_n = x_1^2 \text{ and } x_2 = \cdots = x_{n-1} = 0 \} \subset S(P)$ contains non-trivial self-affine sets. It is also easy to see that $S(P)$ contains self-affine sets having dimension larger than one. Fix an interval $[a,b] \subset \R$ and define a mapping $\eta \colon [a,b]^{n-1} \to \R^n$ by setting $\eta(x_1,\ldots,x_{n-1}) = (x_1,\ldots,x_{n-1},x_1^2+\cdots+x_{n-1}^2)$. Let $\{ c_i(x_1,\ldots,x_{n-1}) + (d_i,\ldots,d_i) \}_{i=1}^\ell$ be an affine IFS on $\R^{n-1}$ so that $[a,b]^{n-1}$ is the self-affine set generated by it. Define $f_i \colon \R^n \to \R^n$ by setting
  \begin{equation*}
    f_i(x_1,\ldots,x_n) =
    \begin{pmatrix}
      c_i & 0 & \cdots & 0 & 0 \\
      0 & c_i & \cdots & 0 & 0 \\
      \vdots & \vdots & \ddots & \vdots & \vdots \\
      0 & 0 & \cdots & c_i & 0 \\
      2c_id_i & 2c_id_i & \cdots & 2c_id_i & c_i^2
    \end{pmatrix}
    \begin{pmatrix}
      x_1 \\ x_2 \\ \vdots \\ x_{n-1} \\ x_n
    \end{pmatrix}
    +
    \begin{pmatrix}
      d_i \\ d_i \\ \vdots \\ d_i \\ (d-1)d_i^2
    \end{pmatrix}
  \end{equation*}
  for all $(x_1,\ldots,x_n) \in \R^n$ and $i \in \{ 1,\ldots,\ell \}$. Since $f_i(\eta(x_1,\ldots,x_{n-1})) = \eta(c_ix_1+d_i,\ldots,c_ix_{d-1}+d_i)$ the image $\eta([a,b]^{n-1}) \subset S(P)$ is invariant under the affine IFS $\{ f_i \}_{i=1}^\ell$.
\end{example}

 We shall next introduce a general condition which guarantees the algebraic surface to contain self-affine sets. Suppose that $P \colon \R^n \to \R$ is a non-constant polynomial with real coefficients. We say that a contractive invertible affine map $f$ is a \emph{scaling factor} for $P$ if there exists a constant $C \in \R$ such that
\begin{equation} \label{eq:scaling-factor}
  P \circ f = CP.
\end{equation}
A polynomial $P$ is called \emph{self-affine} if it has two scaling factors with distinct fixed points.

\begin{example}
  Let $P \colon \R^2 \to \R$, $P(x_1,x_2) = x_2-x_1$. It is easy to see that $f \colon \R^2 \to \R^2$, $f(x_1,x_2) = \tfrac12(x_1,x_2)$, and $g \colon \R^2 \to \R^2$, $g(x_1,x_2) = \tfrac12(x_1+1,x_2+1)$, are scaling factors for $P$ and have distinct fixed points.
\end{example}

The following proposition shows that a polynomial $P$ being self-affine is sufficient for the inclusion of self-affine sets.

\begin{proposition} \label{thm:selfaffinepoly}
  If $P \colon \R^n \to \R$ is a self-affine polynomial, then $S(P)$ contains a non-trivial self-affine set.
\end{proposition}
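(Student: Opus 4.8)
The plan is to use the two scaling factors directly as the maps of an affine IFS. Let $f$ and $g$ be scaling factors for $P$ with distinct fixed points, and write $P \circ f = C_f P$ and $P \circ g = C_g P$ for the corresponding constants coming from \eqref{eq:scaling-factor}. The first and central observation is that the defining relation forces $S(P)$ to be invariant under each scaling factor: if $x \in S(P)$, then $P(f(x)) = C_f P(x) = 0$, so $f(x) \in S(P)$, and likewise $g(x) \in S(P)$. Thus $f(S(P)) \cup g(S(P)) \subset S(P)$. Note that $S(P)$ itself need not be compact, so one cannot simply invoke $E$ as ``the attractor living inside $S(P)$''; the containment will instead be obtained by a seeding argument together with the closedness of $S(P)$.

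Next I would show that the fixed points lie on the surface. Let $x_f$ denote the fixed point of $f$. Evaluating $P \circ f = C_f P$ at $x_f$ gives $P(x_f) = C_f P(x_f)$, that is $(1-C_f)P(x_f) = 0$. To conclude $P(x_f) = 0$ it suffices to rule out $C_f = 1$. If $C_f = 1$, then $P \circ f^j = P$ for all $j \in \N$; since $f$ is contractive, $f^j(x) \to x_f$ for every $x$, and continuity of $P$ would give $P(x) = \lim_j P(f^j(x)) = P(x_f)$ for all $x$, contradicting that $P$ is non-constant. Hence $C_f \ne 1$ and $x_f \in S(P)$; the same argument gives $x_g \in S(P)$ for the fixed point $x_g$ of $g$.

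Finally, since $f$ and $g$ are contractive invertible affine maps, the pair $\{f,g\}$ is an affine IFS and hence admits a unique non-empty compact self-affine set $E$ satisfying $E = f(E) \cup g(E)$. To see that $E \subset S(P)$, I would start the Hutchinson iteration from the compact set $K = \{x_f, x_g\} \subset S(P)$: writing $F(A) = f(A) \cup g(A)$, the invariance above and induction show that every iterate $F^k(K)$ remains inside $S(P)$. Since $F^k(K) \to E$ in the Hausdorff metric and $S(P)$ is closed, every point of $E$ is a limit of points of $S(P)$, whence $E \subset S(P)$. The set $E$ is non-trivial because it contains the fixed points of both generating maps, namely the two distinct points $x_f \ne x_g$. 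This produces a non-trivial self-affine set contained in $S(P)$, as required.

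The only genuinely delicate point is the second step, locating the fixed points on $S(P)$: the relation $P \circ f = C_f P$ by itself does not place $x_f$ on the surface, and one must exploit both the non-constancy of $P$ and the contractivity of $f$ to exclude the degenerate value $C_f = 1$. Everything else is a routine combination of the invariance of $S(P)$ with the standard existence and Hausdorff-approximation properties of self-affine attractors.
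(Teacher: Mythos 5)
Your proof is correct. It shares the paper's overall strategy --- put fixed points of scaling factors on $S(P)$, then use the closedness of $S(P)$ to capture the whole attractor --- but both key steps are executed differently, and your versions are somewhat more economical. To exclude the degenerate constant, the paper proves the stronger fact $|C|<1$ for every scaling factor: since the non-constant polynomial $P$ is unbounded, one can pick $x$ with $|P(x)| > |P(x_0)|+1$, and then $C^j P(x) = P(f^j(x)) \to P(x_0)$ forces $|C|<1$. You only rule out $C_f = 1$, which is all your argument needs, using nothing beyond continuity, contractivity, and non-constancy. More significantly, the containment $E \subset S(P)$ is obtained by a different mechanism: the paper notes that every finite composition of the two scaling factors $h$ and $g$ is again a scaling factor, places the fixed point of each such composition on $S(P)$, and implicitly invokes the fact that these periodic points are dense in the attractor; you instead use the invariance $f(S(P)) \cup g(S(P)) \subset S(P)$ --- which the paper never states --- and run the Hutchinson iteration from the seed $\{x_f,x_g\} \subset S(P)$, appealing to convergence in the Hausdorff metric and closedness of $S(P)$. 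Your route avoids the semigroup of compositions and the density-of-periodic-points fact altogether, trading them for the (equally standard) Hausdorff convergence of the Hutchinson operator; it also makes the non-triviality of $E$ explicit (it contains the two distinct fixed points), a point the paper leaves implicit.
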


\begin{proof}
  Let $f$ be a scaling factor for $P$ with a constant $C$. Note that there exists a non-singular $d \times d$ matrix $M$ with $\| M \| < 1$ and $a \in \R^n$ so that $f(x) = Mx+a$ for all $x \in \R^n$. Observe that
  \begin{equation*}
    f^j(x) = M^jx + \sum_{i=0}^{j-1} M^ia \to \sum_{i=0}^\infty M^ia =: x_0
  \end{equation*}
  as $j \to \infty$, where $x_0 \in \R^n$ is the fixed point of $f$. Choose $x \in \R^n$ such that
  \begin{equation*}
    |P(x_0)|+1 < |P(x)|.
  \end{equation*}
  Such a point $x$ exists since $P$ is not bounded. Since
  \begin{equation*}
    C^jP(x) = P \circ f^j(x) \to P(x_0)
  \end{equation*}
  as $j \to \infty$ we may choose $j$ large enough so that $|C^jP(x)| < |P(x_0)|+1$. Thus $|C|<1$.

  Let $h$ and $g$ be scaling factors for $P$ with distinct fixed points. If $f$ is any finite composition of the mappings $h$ and $g$, then $f$ is a scaling factor for $P$. If $C$ is the constant associated to the scaling factor $f$, then the above reasoning implies that $|C|<1$. Furthermore, if $x_0$ is the fixed point of $f$, then $P(x_0) = P \circ f(x_0) = CP(x_0)$. Since $|C|<1$, this implies $P(x_0)=0$ and $x_0 \in S(P)$. Recalling that $S(P)$ is closed it thus contains the self-affine set generated by the affine IFS $\{ h,g \}$.
\end{proof}

\begin{remark}
  It would be interesting to characterize all the algebraic surfaces associated to self-affine polynomials. For example, in the two-dimensional case, is the surface always contained in a line through the origin? Of course, the ultimate open question here is to characterize all the algebraic surfaces containing self-affine sets.
\end{remark}

\begin{ack}
  Feng was partially supported by the HKRGC GRF grants (projects CUHK401013, CUHK14302415). The authors  are grateful to Christoph Bandt for many valuable comments to improve the paper.
\end{ack}

\end{document}